\documentclass[11pt]{article}

\usepackage{amssymb,amsthm,amsmath}
\usepackage[utf8]{inputenc}

\newcommand{\R}{\mathbb{R}}
\newcommand{\Z}{\mathbb{Z}}
\newcommand{\T}{\mathbb{T}}

\newcommand{\p}[1]{\mathbb{P}\left( #1 \right)}

\newcommand{\call}[4]{\int_{#1}^{#2} {#3} \; \textrm{d} {#4}}

\newcommand{\mb}[1]{\mathbb{#1}}

\newcommand{\norma}[2]{\left\Vert #1 \right\Vert_{#2}}
\newcommand{\e}{\varepsilon}
\newcommand{\sufit}[1]{\left\lceil #1 \right\rceil }
\newcommand{\podloga}[1]{\left\lfloor #1 \right\rfloor }

\DeclareMathOperator{\dist}{dist}

\newtheorem{thm}{Theorem}
\newtheorem{lem}{Lemma}
\newtheorem{cor}{Corollary}

\theoremstyle{definition}
\newtheorem*{def*}{Definition}
\newtheorem*{question*}{Question}

\theoremstyle{remark}
\newtheorem*{rem*}{Remark}

\title{A note on certain convolution operators}

\author{Piotr Nayar \thanks{Research partially supported by NCN Grant no. 2011/01/N/ST1/01839.}, Tomasz Tkocz \thanks{Research partially supported by NCN Grant no. 2011/01/N/ST1/05960.} }
\date{}

\begin{document}

\maketitle

\begin{abstract}
In this note we consider a certain class of convolution operators acting on the $L_p$ spaces of the one dimensional torus. We prove that the identity minus such an operator is nicely invertible on the subspace of functions with mean zero.
\end{abstract}

\noindent {\bf 2010 Mathematics Subject Classification.} Primary 47G10; Secondary 47B34, 26D10.

\noindent {\bf Key words and phrases.} kernel operators, convolution operators, invertibility, Sobolev inequality, Poincar\'e inequality

\section{Introduction}\label{sec:intro}

Let $\T = \R/\Z$ be the one dimensional torus viewed as a compact group with the addition modulo 1, $x\oplus y = (x+y)\mod 1$, $x, y \in \R$ equipped with the Haar measure --- the unique invariant probability measure (the Lebesgue measure). To begin with, fix $1 \leq p \leq \infty$ and consider the averaging operator $U_t$ acting on $L_p(\T)$ (with the usual norm $\|f\| = \left(\int_\T |f|^p\right)^{1/p}$ for $p < \infty$, and $\|f\| = \textrm{ess\,sup}_\T |f|$ for $p = \infty$) 
\begin{equation}\label{opA}
	(U_t f)(x) = \frac{1}{2t} \call{-t}{+t}{f(x \oplus s)}{s}, \quad t\in (0,1).
\end{equation}
If $t$ is small, is the operator $I - U_t$ invertible, or, in other words, how much does $U_tf$ differ from $f$? Of course, averaging a constant function does not change it, but excluding such a trivial case, we get a quantitative answer.

\begin{thm}\label{thm1}
Let $t \in (0,1)$. There exists a universal constant $c$ such that for every $1 \leq p \leq \infty$ and every $f\in L_p(\mb{T})$ with $\int_{\mb{T}}f=0$ we have
\begin{equation}\label{eq:thm1}
	\norma{f-U_t f}{} \geq ct^2 \norma{f}{},
\end{equation}
where $\norma{\cdot}{}$ denotes the $L_p$ norm.
\end{thm}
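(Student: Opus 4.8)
The plan is to prove that $I-U_t$ is invertible on the mean-zero subspace $L_p^0(\T)=\{f\in L_p(\T):\int_\T f=0\}$ by exhibiting its inverse as a convolution operator whose kernel has $L_1$-norm of order $t^{-2}$; Young's inequality then delivers the estimate for every $p$ simultaneously, with one constant. Observe first that $U_t$ is convolution with the probability density $u_t=\frac{1}{2t}\1_{[-t,t]}$ (read periodically on $\T$), so it is diagonalized by the characters $e_n(x)=e^{2\pi i nx}$: one computes $U_t e_n=\phi(nt)\,e_n$ with $\phi(u)=\frac{\sin(2\pi u)}{2\pi u}$ and $\phi(0)=1$. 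Since $|\sin(2\pi u)|<2\pi|u|$ for $u\neq0$, we get $\phi(nt)<1$ for every $n\neq0$, so $I-U_t$ acts on each nonconstant character by the positive multiplier $1-\phi(nt)$.

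Next I would write down a candidate inverse. Let $R$ be convolution with the kernel $H_R$ determined by $\widehat{H_R}(n)=\frac{\phi(nt)}{1-\phi(nt)}$ for $n\neq0$ and $\widehat{H_R}(0)=0$. Comparing Fourier coefficients shows $H_R-u_t-u_t*H_R\equiv-1$ as a function on $\T$, and convolving the constant $-1$ against a mean-zero function gives $0$; hence $(I-U_t)(I+R)f=f$ for every $f\in L_p^0(\T)$, i.e. $(I-U_t)^{-1}=I+R$ on the mean-zero subspace. The value of this representation is that, once $H_R\in L_1(\T)$, Young's inequality yields $\norma{(I+R)g}{}\le(1+\norma{H_R}{1})\norma{g}{}$ for \emph{all} $1\le p\le\infty$ at once.

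The heart of the argument, and the step I expect to be the main obstacle, is the estimate $\norma{H_R}{1}\le C t^{-2}$. I would reach it through $\norma{H_R}{1}\le\norma{H_R}{2}$ (valid on the probability space $\T$) together with Parseval, reducing matters to $\sum_{n\neq0}\left|\widehat{H_R}(n)\right|^2\le C t^{-4}$. Writing $u=2\pi nt$ and $\psi(u)=u-\sin u$, one has $\widehat{H_R}(n)=\frac{\sin u}{\psi(u)}$. The elementary facts that $\psi$ is nondecreasing, that $\psi(u)\ge c_0u^3$ for $0<u\le1$, and that $\sup_{u\ge1}\frac{\sin u}{u}<1$ (so $\psi(u)\ge c_1u$ for $u\ge1$) give $|\widehat{H_R}(n)|\lesssim (nt)^{-2}$ on the low modes $n\le 1/(2\pi t)$ and $|\widehat{H_R}(n)|\lesssim (nt)^{-1}$ on the high modes. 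Summing the squares (a convergent $\sum n^{-4}$ from the low range, a convergent tail $\sum n^{-2}$ from the high range) produces $C t^{-4}$, the dominant contribution being the lowest frequencies $n=\pm1$, where $1-\phi(t)\asymp t^2$.

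Finally, applying the inverse bound to $g=(I-U_t)f=f-U_t f$ gives $\norma{f}{}=\norma{(I+R)g}{}\le(1+\norma{H_R}{1})\norma{f-U_tf}{}\le C' t^{-2}\norma{f-U_tf}{}$, since $t^{-2}\ge1$ on $(0,1)$; rearranging yields \eqref{eq:thm1} with $c=1/C'$. I would stress that the whole argument operates at the level of $L_1$ kernels and Fourier multipliers, so no smoothness or density reduction is needed, and that the $t^2$ rate is precisely the order of vanishing of $1-\phi(u)=1-\frac{\sin 2\pi u}{2\pi u}$ at $u=0$ -- the quantitative version of the heuristic $f-U_tf\approx-\frac{t^2}{6}f''$ suggested by the Poincar\'e-type mechanism.
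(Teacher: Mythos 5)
Your proof is correct, but it takes a genuinely different route from the paper's. The paper never inverts $I-U_t$ by hand: it deduces Theorem 1 from the more general Theorem 2 (take $Y$ uniform on $[-1,1]$, so that $U_t f(x)=\E f(x\oplus tY)$), and the proof there is probabilistic. One iterates the operator, writing $A_t^{n}f(x)=\E f\left(x\oplus t(Y_1\oplus\ldots\oplus Y_n)\right)$ with $n(t)\approx t^{-2}$, uses a local central limit theorem (Lemma 1) to show that $t(Y_1+\ldots+Y_{n(t)})\bmod 1$ is ``$c$-good'', i.e.\ its law dominates $c$ times Lebesgue measure, deduces via a Jensen-inequality argument (Lemma 2) that $\norma{A_t^{n(t)}f}{}\le(1-c)\norma{f}{}$ on mean-zero $f$ for every $p$ at once, and finishes with the telescoping bound $\norma{f-A_tf}{}\ge\frac{1}{n}\norma{f-A_t^{n}f}{}$. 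Your route instead exhibits $(I-U_t)^{-1}=I+R$ explicitly as a convolution operator and controls $\norma{H_R}{1}\le\norma{H_R}{2}\lesssim t^{-2}$ by Parseval; I checked the key steps and they hold: with $u=2\pi nt$ one has $\widehat{H_R}(n)=\sin u/\psi(u)$, the bounds $\psi(u)\ge u^3/10$ on $(0,1]$ and $\psi(u)\ge(1-\sin 1)u$ for $u\ge1$ give $|\widehat{H_R}(n)|\lesssim(nt)^{-2}$ on low modes and $\lesssim(nt)^{-1}$ on high modes, the square sum is $O(t^{-4})$ (the high-mode tail is only $O(t^{-1})$), Riesz--Fischer gives $H_R\in L_2\subset L_1$, the identity $H_R-u_t-u_t*H_R=-1$ is correctly verified on Fourier coefficients so that $(I-U_t)(I+R)=I$ on the mean-zero subspace, and Young's inequality indeed gives the $p$-uniform conclusion; your observation that the modes $n=\pm 1$ dominate matches the paper's sharpness remark with $f(x)=\cos(2\pi x)$. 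As for what each approach buys: yours is shorter, self-contained, produces explicit constants, and gives the stronger quantitative statement $\norma{(I-U_t)^{-1}}{L_p^0\to L_p^0}\lesssim t^{-2}$ uniformly in $p$; but it leans on the explicit decay of the sinc multiplier, and it would not survive the generalization that is the paper's real goal. For an $\ell$-decent $Y$ with a nontrivial singular part, $\widehat{\mu_{tY}}(n)$ does not tend to $0$, the coefficients $\widehat{\mu}(nt)/(1-\widehat{\mu}(nt))$ are then at best bounded rather than square-summable, the candidate inverse kernel need not be an $L_1$ function, and Young's inequality --- hence the uniformity in $p$, which the paper stresses is the whole subtlety beyond $p=2$ --- breaks down; the probabilistic smoothing-by-iteration argument is exactly what replaces it.
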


Note that if $p$ was equal to $2$, then, with the aid of the Fourier analysis, the above estimate would be trivial. However, $\|\cdot\|$ is set to be the $L_p$ norm for some $1 \leq p \leq \infty$, the constant does not depend on $p$, therefore the situation is more subtle. 

When $p=1$, if we further estimate the left hand side of \eqref{eq:thm1} using the Sobolev inequality, see \cite{GT}, we obtain the following corollary.

\begin{cor}\label{cor}
Let us consider $t \in (0,1)$ and assume that $f$ belongs to the Sobolev space $W^{1,1}(\mb{T})$ with $\int_{\mb{T}}f=0$. Then we have
\begin{equation}\label{eq:sob}
	\call{\mb{T}}{}{ \left| f'(x)- \frac{f(x\oplus t)-f(x\oplus -t)}{2t} \right|  }{x} \geq ct^2 \call{\mb{T}}{}{|f(x)|}{x},
\end{equation}
where $c > 0$ is a universal constant.
\end{cor}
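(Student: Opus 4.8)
The plan is to deduce the corollary directly from Theorem~\ref{thm1} specialised to $p=1$, once the left-hand side of \eqref{eq:sob} has been recognised as the $L_1$ norm of the derivative of $f-U_t f$. The key computation is the following: writing $F(x)=\call{0}{x}{f(u)}{u}$ for the primitive of $f$ (which is absolutely continuous since $f\in L_1(\T)$), we have
\begin{equation*}
	(U_t f)(x)=\frac{1}{2t}\call{x-t}{x+t}{f(u)}{u}=\frac{F(x+t)-F(x-t)}{2t},
\end{equation*}
so that $U_t f$ is absolutely continuous with $(U_t f)'(x)=\frac{f(x\oplus t)-f(x\oplus -t)}{2t}$ for almost every $x$. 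Hence $g:=f-U_t f$ belongs to $W^{1,1}(\T)$ and its weak derivative is exactly the expression appearing under the integral in \eqref{eq:sob}; in other words, the left-hand side of \eqref{eq:sob} equals $\call{\T}{}{|g'(x)|}{x}$.

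With this identification the proof splits into two elementary ingredients. First, I would check that $g$ has mean zero, which is needed both to apply Theorem~\ref{thm1} and to run the Poincar\'e inequality. This follows from Fubini's theorem and the translation invariance of the Haar measure, which give $\call{\T}{}{(U_t f)(x)}{x}=\call{\T}{}{f(x)}{x}$, and therefore $\call{\T}{}{g(x)}{x}=0$. Second, I would invoke the one--dimensional Poincar\'e (Sobolev) inequality on the torus: since $g\in W^{1,1}(\T)$ is continuous and has mean zero, it vanishes at some point $x_0$, and integrating $g'$ along the two complementary arcs of $\T$ joining $x_0$ to an arbitrary $x$ yields $2|g(x)|\le \call{\T}{}{|g'(u)|}{u}$. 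Consequently
\begin{equation*}
	\call{\T}{}{|g(x)|}{x}\le \norma{g}{\infty}\le \frac{1}{2}\call{\T}{}{|g'(x)|}{x}.
\end{equation*}

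It then remains only to combine these facts. Applying Theorem~\ref{thm1} with $p=1$ to the mean--zero function $f$ reads $\call{\T}{}{|g(x)|}{x}=\call{\T}{}{|f(x)-U_t f(x)|}{x}\ge ct^2\call{\T}{}{|f(x)|}{x}$, and feeding this into the Poincar\'e estimate produces
\begin{equation*}
	\call{\T}{}{|g'(x)|}{x}\ge 2\call{\T}{}{|g(x)|}{x}\ge 2ct^2\call{\T}{}{|f(x)|}{x},
\end{equation*}
which is \eqref{eq:sob} after renaming the constant. The substance of the corollary is thus carried entirely by Theorem~\ref{thm1}; the only points requiring genuine care are the differentiation identity for $U_t f$ (resting on the absolute continuity of the primitive of $f$) and the verification that the Poincar\'e step holds with a constant independent of $f$ and $t$. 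Neither is a real obstacle, so I expect no serious difficulty beyond bookkeeping.
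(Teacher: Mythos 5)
Your proposal is correct and follows the same route as the paper: the paper obtains the corollary by applying Theorem~\ref{thm1} with $p=1$ to get $\norma{f-U_tf}{1}\geq ct^2\norma{f}{1}$ and then bounding the left-hand side of \eqref{eq:sob}, which is $\norma{(f-U_tf)'}{1}$, from below via the Sobolev (Poincar\'e) inequality for the mean-zero function $f-U_tf$. Your explicit verification of the differentiation identity $(U_tf)'(x)=\frac{f(x\oplus t)-f(x\oplus -t)}{2t}$ and your two-arc derivation of the Poincar\'e step with constant $\tfrac12$ simply spell out what the paper delegates to the citation of \cite{GT}.
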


\begin{rem*}
Setting $t = 1/2$, inequality \eqref{eq:sob} becomes the usual Sobolev inequality, so \eqref{eq:sob} can be viewed as a certain generalization of the Sobolev inequality.
\end{rem*}

\begin{rem*}
Set $f(x) = \cos (2\pi x)$. Then $\norma{f-U_t f}{} = \norma{f}{}\left( 1 - \frac{1}{2\pi t}\sin (2\pi t) \right) \approx t^2\norma{f}{}$, for small $t$. Therefore, the inequality in Theorem \ref{thm1} is sharp in a sense.
\end{rem*}

\noindent In this note we give a proof of a generalization of Theorem \ref{thm1}. 
We say that a $\T$-valued random variable $Z$ is $c$-\emph{good} with some positive constant $c$ if $\p{Z \in A} \geq c|A|$ for all measurable $A \subset \T$. Equivalently, by Lebesgue's decomposition theorem it means that the absolutely continuous part of $Z$ (with respect to the Lebesgue measure) has a density bounded below by a positive constant.
We say that a real random variable $Y$ is $\ell$-\emph{decent} if $Y_1+\ldots +Y_\ell$ has a nontrivial absolutely continuous part, where $Y_1, Y_2, \ldots$ are i.i.d. copies of $Y$. Our main result reads 
\begin{thm}\label{thm2}
Given $t \in (0,1)$ and an $\ell$-decent real random variable $Y$, consider the operator $A_t$ given by
\begin{equation}\label{eq:defA}
 (A_t f)(x) = \mb{E}f(x \oplus tY).
\end{equation}
Then there exists a positive constant $c$ which depends only on the distribution of the random variable $Y$ such that for every $1 \leq p \leq \infty$ and every $f \in L_p(\mb{T})$ with $\int_\mb{T}f=0$ we have
\[
	\norma{f-A_t f}{} \geq ct^2 \norma{f}{},
\]
where $\norma{\cdot}{}$ denotes the $L_p$ norm.
\end{thm}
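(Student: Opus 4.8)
The plan is to realise $A_t$ as a convolution and to build a uniformly bounded (in $p$) inverse of $I-A_t$ on the mean-zero subspace. Writing $\mu_t$ for the law of $tY \bmod 1$ on $\T$, we have $(A_t f)(x)=\int_\T f(x\oplus s)\,\dd\mu_t(s)$, so $A_t$ is convolution by the probability measure $\mu_t$; consequently $A_t$ is a contraction on every $L_p(\T)$ and, since $\int_\T A_t f=\int_\T f$, it preserves the subspace $L_p^0$ of mean-zero functions. The first ingredient is an elementary contraction principle: if a $\T$-valued variable $Z$ is $c$-good, then its law splits as $c\,\dd x+(1-c)\rho$ with $\rho$ a probability measure, and for $f\in L_p^0$ the $\dd x$-part annihilates $f$, whence $\norma{f*\mathrm{law}(Z)}{}=(1-c)\norma{f*\rho}{}\le(1-c)\norma{f}{}$, uniformly in $p$.

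The decisive point is then the following claim: there are a constant $C=C(Y)\ge 1$ and a $c_0=c_0(Y)>0$, both depending only on the law of $Y$, such that for $M:=\lceil C/t^2\rceil$ the variable $t(Y_1+\dots+Y_M)\bmod 1$ is $c_0$-good. Granting this, $A_t^M$ is convolution by a $c_0$-good law, so by the contraction principle $\norma{A_t^M g}{}\le(1-c_0)\norma{g}{}$ for all $g\in L_p^0$. Hence $\norma{A_t^{jM}}{}\le(1-c_0)^j$ on $L_p^0$, and summing the Neumann series in blocks of length $M$ gives, for every $1\le p\le\infty$,
\[
\norma{(I-A_t)^{-1}}{}\le\sum_{k\ge0}(1-c_0)^{\lfloor k/M\rfloor}=\frac{M}{c_0}\le\frac{2C}{c_0}\,t^{-2}.
\]
Applying this to $g=(I-A_t)f$ yields $\norma{f}{}\le (2C/c_0)\,t^{-2}\norma{f-A_tf}{}$, which is the theorem with $c=c_0/(2C)$.

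It remains to prove the claim, which is where the exponent $2$ and the whole difficulty reside. The $\ell$-decency of $Y$ means $S_\ell:=Y_1+\dots+Y_\ell$ has a nontrivial absolutely continuous part, so its law dominates $\delta\1_E\,\dd x$ for some bounded $E$ of positive measure and some $\delta>0$; convolving two such blocks ($2\ell$ in place of $\ell$) replaces $\1_E$ by the continuous autocorrelation $\1_E*\1_E$, which is bounded below on a genuine interval. Thus, after relabelling, the law of $S_\ell$ dominates $\delta\1_{(\alpha,\beta)}\,\dd x=q\,U$, where $U=\mathrm{Unif}(\alpha,\beta)$ and $q=\delta(\beta-\alpha)$. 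Writing $S_{\ell N}$ as a sum of $N$ i.i.d.\ blocks and using this mixture representation, with probability at least one half at least $r:=\lfloor qN/2\rfloor\asymp qN$ of the blocks contribute an independent copy of $U$; on that event $tS_{\ell N}\bmod 1$ is a shift of $t(U_1+\dots+U_r)\bmod 1$. The scaled Irwin--Hall sum $t(U_1+\dots+U_r)$ has standard deviation $\asymp t\sqrt r$, and once $t\sqrt r\gtrsim 1$, i.e.\ $r\gtrsim t^{-2}$, the local central limit theorem shows its wrapped density on $\T$ is bounded below by an absolute constant; since the law of $S_{\ell N}$ is a mixture over the above event, its absolutely continuous part inherits this lower bound (up to the factor one half). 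Choosing $N\asymp t^{-2}$ (so $r\asymp t^{-2}$) makes $M=\ell N\asymp t^{-2}$ and produces the required $c_0$.

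The main obstacle is precisely this last step: one must turn the qualitative decency hypothesis into a quantitative lower bound on the density of the wrapped sum that is uniform in $p$ and captures the correct rate $r\asymp t^{-2}$. The delicate features are the reduction from an arbitrary absolutely continuous part to a clean uniform component (so that an explicit local limit estimate is available) and the verification that the number of convolutions needed to cover the torus, and hence the resulting goodness constant $c_0$, depends only on the distribution of $Y$ and not on $t$ or $p$; the scale $r\asymp t^{-2}$ is exactly what forces the $t^2$ in the theorem and matches the sharpness example $f(x)=\cos(2\pi x)$.
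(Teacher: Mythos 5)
Your proposal is correct and follows essentially the same route as the paper: your contraction principle is exactly the paper's Lemma~\ref{lem2} (same splitting of the law into $c\,\dd x+(1-c)\rho$), your key claim is the paper's Lemma~\ref{lem1} (goodness of the wrapped normalized sum via a local CLT once $t\sqrt{n}\gtrsim 1$, with the merely decent law handled by a binomial mixture decomposition, your ``at least $\lfloor qN/2\rfloor$ good blocks with probability $\geq 1/2$'' being the paper's $c_{N,N_0}\geq 1/2$ step), and your Neumann-series bound $\norma{(I-A_t)^{-1}}{}\leq M/c_0$ on the mean-zero subspace is equivalent to the paper's telescoping estimate $\norma{f-A_tf}{}\geq\frac{1}{n}\norma{f-A_t^nf}{}$. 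The only implementation-level differences are cosmetic: you upgrade the absolutely continuous bit to a clean uniform distribution on an interval by convolving $\1_E$ with itself (doubling $\ell$), whereas the paper instead observes that the uniform-on-$E$ bit already has characteristic function in $L_2$ and feeds it to the Bhattacharya--Ranga Rao local limit theorem directly.
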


\begin{rem*}
One cannot hope to prove a statement similar to Theorem \ref{thm2} for purely atomic measures. Indeed, just consider the case $p = 1$ and let $Y$ be distributed according to the law $\mu_Y=\sum_{i=1}^\infty p_i \delta_{x_i}$. Then for every $\e>0$ and every $t \in (0,1)$ there exists $f\in L_1(\mb{T})$ such that $\norma{f-A_t(f)}{}<\e$ and $\norma{f}{}=1$. To see this take $N$ such that $\sum_{i=N+1}^\infty p_i < \e/4$ and let $f_n(x)=\frac{\pi}{2}\sin(2\pi nx)$. Then $\norma{f_n}{}=1$. Let $n_0 \geq 8\pi/\e$. Consider a sequence $\big( (\pi n t x_1 \mod 2\pi, \ldots, \pi n t x_N \mod 2\pi) \big)_n$ for $n=0,1,2,\ldots,n_0^N$ and observe that by the pigeonhole principle there exist $0 \leq n_1< n_2 \leq n_0^N$ such that for all $1 \leq i \leq N$ we have $\dist(\pi t x_{i}(n_1-n_2), 2\pi \mb{Z}) \leq \frac{2\pi}{n_0}$. Taking $n=n_2-n_1$ we obtain 
\begin{align*}
	\norma{f_n-A_t(f_n)}{} & \leq \frac{\pi}{2}\sum_{i=1}^N p_i \norma{\sin(2\pi nx)-\sin(2\pi n(x+tx_i))}{} + \frac{\e}{2} \\
	& = \pi \sum_{i=1}^N p_i |\sin(\pi n t x_i)| \cdot \norma{\cos(2\pi nx \oplus \pi nt x_i )}{} + \frac{\e}{2} \\
	& \leq 2 \sum_{i=1}^N p_i |\sin(\pi n t x_i)| + \frac{\e}{2} \leq  \frac{4\pi}{n_0} \sum_{i=1}^N p_i  + \frac{\e}{2} \leq \e. \qed
\end{align*} 
\end{rem*}

Our result gives the bound for the norm of an operator of the form $(I-A_t)^{-1}$. The main difficulty is that this operator is not globally invertible. Of course, boundedness of a resolvent operator $R_\lambda(A) = (A - zI)^{-1}$ has been thoroughly studied (see e.g. \cite{G}, \cite{Ba} which feature Hilbert space setting for Hilbert-Schmidt and Schatten-von Neumann operators). Let us also mention that the first part of the book \cite{CE} is a set of related articles concerning mainly the problem of finding the inverse formula for certain Toeplitz-type operators. The paper \cite{GS} contains the famous Gohberg-Semencul formula for the inverse of a non-Hermitian Toeplitz matrix. In \cite{GH} the authors generalized the results of \cite{CE} to the case of Toeplitz matrices whose entries are taken from some noncommutative algebra with a unit. The operators of the form $I-K$ (acting e.g. on $L_1([0,1])$), where $K$ is a certain operator with a kernel $k(t-s)$, are continuous versions of the operators given by Toeplitz matrices. Paper \cite{GS} deals also with this kind of operators, namely
\[
	(I-K)(f)(x) = f(x) - \call{0}{1}{k(t-s)f(s)}{s},
\]        
where $k \in L_1([-1,1])$. In the case of $I-K$ being invertible, the authors give a formula for the inverse operator $(I-K)^{-1}$ in terms of solutions of certain four integral equations. See also Article 3 in \cite{CE} for generalizations of these formulas.

\section{Proof of Theorem \ref{thm2}}\label{proof}

We begin with two lemmas.

\begin{lem}\label{lem1}
Suppose $Y$ is an $\ell$-decent random variable. Let $Y_1,Y_2,\ldots$ be independent copies of $Y$. Then there exists a positive integer $N=N(Y)$ and numbers $c=c(Y)>0$, $C_0 = C_0(Y) \geq 1$ such that for all $C \geq C_0$ and $n \geq N$ the random variable
\begin{equation}\label{eq:defXC}
	X_n^{(C)} = \left( C \cdot \frac{Y_1+\ldots+Y_n}{\sqrt{n}} \right) \mod 1
\end{equation}
is $c$-good.
\end{lem}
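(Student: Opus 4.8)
The plan is to prove that the absolutely continuous part of the law of $X_n^{(C)}$ has a density bounded below by a positive constant, which is precisely what $c$-goodness means. Write $\mu$ for the law of $Y$, $S_k=Y_1+\dots+Y_k$, and $\phi(\xi)=\E e^{i\xi Y}$. Two properties of $\phi$ will drive everything, and neither needs any moment assumption on $Y$. First, a spectral gap away from the origin: for every $\delta>0$ one has $q_\delta:=\sup_{|\xi|\ge\delta}|\phi(\xi)|<1$. Indeed, writing $\mu^{*\ell}=\nu_{ac}+\nu_s$ and using Riemann--Lebesgue for $\widehat{\nu_{ac}}$ gives $\limsup_{|\xi|\to\infty}|\phi(\xi)|^\ell\le\nu_s(\R)<1$, while $|\phi(\xi)|<1$ for all $\xi\neq0$ because $|\phi(\xi_*)|=1$ would force $Y$ onto a lattice and hence make $\mu^{*\ell}$ purely atomic, contradicting $\ell$-decency; continuity then yields a supremum $<1$ on $\{|\xi|\ge\delta\}$. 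Second, a quadratic bound near the origin: since $1-|\phi(\xi)|^2=\E\big[1-\cos(\xi(Y_1-Y_2))\big]$ and $1-\cos t\ge \tfrac{2}{\pi^2}t^2$ for $|t|\le\pi$, choosing $M$ with $\E\big[(Y_1-Y_2)^2\1_{\{|Y_1-Y_2|\le M\}}\big]>0$ produces constants $c_1,\xi_0>0$ with $|\phi(\xi)|\le e^{-c_1\xi^2}$ for $|\xi|\le\xi_0$.

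Next I would isolate an absolutely continuous minorant, since the singular part of $\mu$ prevents bounding the full density. Truncating the a.c.\ density $g$ of $\mu^{*\ell}$, pick $R$ so that $g_R:=\min(g,R)\1_{[-R,R]}$ satisfies $\gamma:=\int g_R>0$; then $g_R$ is bounded with support in $[-R,R]$. Using two independent $\ell$-blocks, set $w:=g_R*g_R$, a continuous function supported in $[-2R,2R]$ with $\int w=\gamma^2$, $w\in L^2$, and $\widehat{w}=\widehat{g_R}^{\,2}$. For $n\ge 2\ell$ the law of $S_n$ dominates $(w*\rho_n)\,\mathrm{d}y$ as measures, where $\rho_n$ is the law of the remaining $n-2\ell$ summands. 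Wrapping $C S_n/\sqrt n$ modulo $1$ and writing $a=C/\sqrt n$, the a.c.\ density of $X_n^{(C)}$ is therefore at least the $a$-scaled periodization $\Phi_n$ of $W_n:=w*\rho_n$, whose Fourier coefficients are $\widehat{\Phi_n}(k)=\widehat{w}(ka)\,\phi(-2\pi ka)^{\,n-2\ell}$ with $\widehat{\Phi_n}(0)=\gamma^2$. It then suffices to show $\sum_{k\neq0}|\widehat{\Phi_n}(k)|\le \gamma^2/2$, for then $\Phi_n\ge\gamma^2-\gamma^2/2=\gamma^2/2$ everywhere and $c=\gamma^2/2$ works.

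I would estimate this tail by splitting at $|2\pi ka|=\xi_0$. For the low frequencies $|2\pi ka|\le\xi_0$, the quadratic bound gives $|\phi(2\pi ka)|^{n-2\ell}\le e^{-\pi^2 c_1 k^2 C^2}$ once $n\ge 4\ell$ (using $(n-2\ell)/n\ge\tfrac12$), and $|\widehat w|\le\gamma^2$; summing the resulting geometric-type series shows this part is $\le\gamma^2/4$ as soon as $C\ge C_0$ for a suitable $C_0(Y)$. For the high frequencies $|2\pi ka|>\xi_0$, the spectral gap contributes the uniform factor $q^{\,n-2\ell}$ with $q=q_{\xi_0}<1$, so this part is at most $q^{\,n-2\ell}\sum_k|\widehat w(ka)|=q^{\,n-2\ell}\sum_k|\widehat{g_R}(ka)|^2$; applying Parseval to the period-$1/a$ periodization of $g_R$ bounds this lattice sum by $(2R+\sqrt n/C_0)\|g_R\|_2^2$, so the high-frequency contribution is $O(q^{\,n-2\ell}\sqrt n)\to0$ and is $\le\gamma^2/4$ once $n\ge N(Y)$. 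Combining the two ranges finishes the proof, with $N$, $C_0$ and $c=\gamma^2/2$ depending only on the law of $Y$.

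The main obstacle I expect is the uniformity in $n$ and $C$ simultaneously. The singular part forces the passage to an a.c.\ minorant rather than the full density, and then the smallest available frequency is $2\pi a=2\pi C/\sqrt n$, which tends to $0$ as $n\to\infty$ for fixed $C$; one must beat this by the $(n-2\ell)$-th power of $\phi$. This is why the moment-free quadratic estimate near $0$ (handling the ``finite-variance--like'' worst case, where $C$ must be large) and the spectral gap at high frequencies (where the lattice sampling of $\widehat w$ grows like $\sqrt n$ and must be defeated by $q^{\,n}$) have to be used together; securing the near-$0$ bound without assuming finite variance is the delicate point.
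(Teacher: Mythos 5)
Your proof is correct, and it takes a genuinely different route from the paper's. The paper argues in three steps: Step I treats $Y$ whose characteristic function lies in some $L_p(\R)$ by citing the Local Central Limit Theorem (Theorem 19.1 in \cite{BR}) to get uniform convergence of the density of $(S_n - n\E Y)/\sqrt{n}$ to the Gaussian, and then bounds the wrapped density from below by summing the periodization over the roughly $C$ integers $k$ with $x+k+\sqrt{n}\,\E Y \in (-2C,2C)$; Step II handles mixtures $q\mu+(1-q)\nu$ via a binomial expansion of $\mu_Y^{\star N}$, retaining the terms with at least $N_0 \approx qN - C_1\sqrt{q(1-q)N}$ factors of $\mu$ (total mass at least $1/2$) and using that $c$-goodness is preserved under convolution with an independent variable; Step III reduces a general $\ell$-decent $Y$ to Step II by blocking into sums of $\ell$ and extracting a uniform-on-a-set bit whose characteristic function is in $L_2$. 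You instead work entirely on the Fourier side: minorize the law of $S_{2\ell}$ by $w\,\mathrm{d}y$ with $w = g_R * g_R$ continuous and compactly supported, periodize at scale $a = C/\sqrt{n}$, and kill all nonzero modes by combining a moment-free quadratic decay of $|\phi|$ near the origin (via $1-|\phi(\xi)|^2 = \E\bigl[1-\cos(\xi(Y_1-Y_2))\bigr]$ and truncation at level $M$, using only nondegeneracy, which $\ell$-decency supplies) with a Cram\'er-type spectral gap $\sup_{|\xi|\ge\xi_0}|\phi(\xi)|<1$ (Riemann--Lebesgue on the absolutely continuous bit of $\mu^{\star\ell}$ plus non-latticeness). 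Your two uniformity checks are exactly the right ones and both go through: for $n \ge 4\ell$ one has $(n-2\ell)/n \ge 1/2$, so the low-frequency sum is bounded by $\gamma^2\sum_{k\neq 0}e^{-\pi^2 c_1 k^2 C^2} \le \gamma^2/4$ once $C \ge C_0(Y)$, uniformly in $n$; and the Parseval bound on the lattice sum $\sum_k |\widehat{g_R}(ka)|^2$ grows only like $R + \sqrt{n}/C_0$, so it is beaten by $q^{\,n-2\ell}$ for $n \ge N(Y)$, uniformly in $C \ge C_0$ (for very large $C$ the low-frequency range is simply empty, which is harmless). What your route buys: it is self-contained (no LCLT citation), genuinely moment-free --- note that the paper's Step I implicitly requires $\var(Y)<\infty$, which is only arranged in Step III by choosing a suitable bounded bit --- and it yields explicit constants $c = \gamma^2/2$, $C_0$, $N$ with no circularity ($C_0$ is fixed by the low-frequency estimate before $N$ is chosen). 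What the paper's route buys: brevity modulo the reference, and two reusable structural facts (goodness survives independent convolution; the binomial decomposition of a mixture's convolution power). The small constant slips in your write-up ($\pi^2$ in place of $2\pi^2$ in the exponent, and the precise overlap count $2R + 2/a$ in the Parseval step) only weaken constants and do not affect correctness.
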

\begin{proof}

We prove the lemma in a few steps considering more and more general assumptions about $Y$.

\emph{Step I. }
Suppose that the characteristic function of $Y$ belongs to $L_p(\R)$ for some $p \geq 1$. In this case, by a certain version of the Local Central Limit Theorem, e.g. Theorem 19.1 in \cite{BR}, p. 189, we know that the density $q_n$ of $(Y_1 + \ldots + Y_n - n\mb{E}Y)/\sqrt{n}$ exists for sufficiently large $n$, and satisfies
\begin{equation}
\label{eq:densities}
\sup_{x \in \mb{R}} \left| q_n(x) - \frac{1}{\sqrt{2\pi}\sigma}e^{-x^2/2\sigma^2} \right| \xrightarrow[n\to\infty]{} 0,
\end{equation}
where $\sigma^2 = \mathrm{Var}(Y)$. Observe that the density $g_n^{(C)}$ of $X_n^{(C)}$ equals
\[ g_n^{(C)}(x) = \sum_{k \in \mb{Z}} \frac{1}{C}q_n\left( \frac{1}{C}(x + k + \sqrt{n}\mb{E}Y) \right), \qquad x \in [0,1]. \]
Using \eqref{eq:densities}, for $\delta = \frac{e^{-2/\sigma^2}}{\sqrt{2\pi}\sigma}$ we can find $N = N(Y)$ such that
\[ q_n(x) >  \frac{1}{\sqrt{2\pi}\sigma}e^{-x^2/2\sigma^2} - \delta/8, \qquad x \in \mb{R}, \ n \geq N.\]
Therefore, to be close to the maximum of the Gaussian density we sum over only those $k$'s for which $x + k + \sqrt{n}\mb{E}Y \in (-2C, 2C)$ for all $x \in [0,1]$. Since there are at least $C$ and at most $4C$ such $k$'s, we get that
\[ g_n^{(C)}(x) > \frac{1}{C}\frac{1}{\sqrt{2\pi}\sigma}e^{-2/\sigma^2} \cdot C - \frac{1}{C}\frac{\delta}{8}\cdot 4C = \frac{1}{2\sqrt{2\pi}\sigma}e^{-2/\sigma^2}.\]
In particular, it implies that $X_n^{(C)}$ is $c$-good with $c = \frac{1}{2\sqrt{2\pi}\sigma}e^{-2/\sigma^2}$. Thus, in this case, it suffices to set $C_0 = 1$.

\emph{Step II. }
Suppose that the law of $Y$ is of the form $q\mu + (1-q)\nu$ for some $q \in (0,1]$ and some Borel probability measures $\mu, \nu$ on $\R$ such that the characteristic function of  $\mu$ belongs to $L_p(\R)$ for some $p \geq 1$. 
Notice that
\begin{align*}
\mu_{Y_1 + \ldots + Y_N} &= \mu_Y^{\star N} = (q\mu + (1-q)\nu)^{\star N} = \sum_{k=0}^N {N \choose k} q^k(1-q)^{N-k}\mu^{\star k}\nu^{\star (N-k)} \\
&\geq \sum_{k = N_0}^N  {N \choose k} q^k(1-q)^{N-k}\mu^{\star k}\nu^{\star (N-k)} = c_{N,N_0}\left( \mu^{\star N_0} \star \rho_{N,N_0} \right),
\end{align*}
where
\[
\rho_{N,N_0} = \frac{1}{c_{N,N_0}} \sum_{k = N_0}^N  {N \choose k} q^k(1-q)^{N-k}\mu^{\star k-N_0}\nu^{\star (N-k)}
\]
is a probability measure, and
\[
c_{N,N_0} = \sum_{k = N_0}^N  {N \choose k} q^k(1-q)^{N-k}
\]
is a normalisation constant. Choosing $N_0 = \lfloor qN - C_1\sqrt{q(1-q)N} \rfloor$ we can guarantee that $c_{N,N_0} \geq 1/2$ eventually, say for $N \geq \tilde N$. Denoting by $\bar Y$, $Z$ a random variable with the law $\mu$, $\rho_{N,N_0}$ respectively and by $\bar Y_i$ i.i.d. copies of $\bar Y$, we get
\begin{align*}
\p{X_N^{(C)} \in A} \geq c_{N,N_0}\p{\left( C\frac{\bar Y_1 + \ldots + \bar Y_{N_0}}{\sqrt{N}} + C\frac{Z_{N,N_0}}{\sqrt{N}} \right) \mod 1 \in A}.
\end{align*}
By Step I, the first bit $C(\bar Y_1 + \ldots + \bar Y_{N_0})/\sqrt{N}$ is $c$-good for some $c > 0$ and $C \geq C_0^{(II)} = \sup_{N \geq \tilde N} \sqrt{N/N_0}$ . Moreover, note that if $U$ is a $c$-good $\mb{T}$-valued r.v., then so is $U \oplus V$ for every $\mb{T}$-valued r.v. $V$ which is independent of $U$. As a result, $X_N^{(C)}$ is $c/2$-good. 

\emph{Step III.}
Now we consider the general case, i.e. $Y$ is $\ell$-decent for some $\ell \geq 1$. For $n \geq \ell$ we can write 
\[
C \cdot \frac{Y_1+\ldots+Y_n}{\sqrt{n}} = C\sqrt{\frac{\podloga{n/\ell}}{n}}\cdot\frac{\tilde Y_1 + \ldots + \tilde Y_{\podloga{n/\ell}}}{\sqrt{\podloga{n/\ell}}} + C\frac{\tilde R}{\sqrt{n}}
\]
with $\tilde Y_j = Y_{(j-1)\ell + 1} + \ldots + Y_{j\ell}$ for $j = 1, \ldots, \podloga{n/\ell}$, and $\tilde R = Y_{\podloga{n/\ell}\ell + 1} + \ldots + Y_n$. Since the absolutely continuous part of the law $\mu$ of $\tilde Y_j$ is nontrivial, then $\mu$ is of the form $q\nu_1 + (1-q)\nu_2$ with $q \in (0,1]$ and the characteristic function of $\nu_1$ belonging to some $L_p$. Indeed, $\mu$ has a bit which is a uniform distribution on some measurable set whose characteristic function is in $L_2$. Therefore, applying Step II for $\tilde Y_j$'s we get that $X_n^{(C)}$ is $c$-good when $C\sqrt{\frac{\podloga{n/\ell}}{n}} \geq C_0^{(II)}$. So we can set $C_0 = C_0^{(II)}\sqrt{2\ell}$.
\end{proof}

\begin{lem}\label{lem2}
Suppose $Z$ is a $\mb{T}$-valued $c$-good random variable and $B_Z$ is the operator defined by $(B_Z f)(x) = \mb{E}f(x\oplus Z)$. Then for every $1 \leq p \leq \infty$ and every $f \in L_p(\mb{T})$ with $\int_{\mb{T}}f=0$ we have $\norma{B_Z f}{} \leq (1-c) \norma{f}{}$, where $\norma{\cdot}{}$ is the $L_p$ norm.
\end{lem}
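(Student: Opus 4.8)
The plan is to exploit the definition of $c$-goodness to peel off a uniform component from the law of $Z$ and then use the mean-zero hypothesis to annihilate exactly that component. Writing $\lambda$ for the Haar measure on $\mb{T}$ and $\mu_Z$ for the law of $Z$, the hypothesis $\p{Z \in A} \geq c|A|$ says precisely that the signed measure $\mu_Z - c\lambda$ is nonnegative; taking $A = \mb{T}$ also forces $c \leq 1$, so $\mu_Z - c\lambda$ has total mass $1-c \geq 0$. Hence the law of $Z$ decomposes as
\[
\mu_Z = c\lambda + (1-c)\nu,
\]
where $\nu$ is a genuine probability measure on $\mb{T}$. Averaging against $\mu_Z$ then yields the operator identity $B_Z = cB_\lambda + (1-c)B_\nu$, with $B_\lambda f(x) = \int_{\mb{T}} f(x\oplus z)\,\dd z$ and $B_\nu f(x) = \int_{\mb{T}} f(x\oplus z)\,\dd\nu(z)$.

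The first term vanishes on mean-zero functions: by translation invariance of the Haar measure, $B_\lambda f(x) = \int_{\mb{T}} f = 0$ for every $x$, using the assumption $\int_{\mb{T}} f = 0$. Therefore $B_Z f = (1-c)B_\nu f$, and it remains only to verify that $B_\nu$ contracts every $L_p$ norm, i.e. $\norma{B_\nu f}{} \leq \norma{f}{}$ with a bound independent of $p$.

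For $p = \infty$ this is immediate from the triangle inequality, since $\nu$ is a probability measure. For $1 \leq p < \infty$ I would apply Jensen's inequality to the convex function $t\mapsto|t|^p$, which gives the pointwise bound $|B_\nu f(x)|^p \leq \int_{\mb{T}} |f(x\oplus z)|^p\,\dd\nu(z)$; integrating in $x$ and swapping the order of integration by Tonelli, the inner integral $\int_{\mb{T}} |f(x\oplus z)|^p\,\dd x$ equals $\norma{f}{}^p$ for each fixed $z$ (again by translation invariance), so the whole expression collapses to $\norma{f}{}^p$. Combining this with the previous paragraph gives $\norma{B_Z f}{} = (1-c)\norma{B_\nu f}{} \leq (1-c)\norma{f}{}$, as claimed.

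I expect no serious obstacle here; essentially all the content sits in the first step, and the rest is the standard fact that convolution with a probability measure is an $L_p$ contraction uniformly in $p$. The one conceptual point deserving emphasis is that the mean-zero hypothesis is used exactly to discard the uniform part of $Z$, and it is precisely this cancellation that produces the genuine gain $1-c$ over the trivial contraction bound --- without it $B_Z$ fixes constants and no such estimate could hold.
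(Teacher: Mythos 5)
Your proof is correct and follows essentially the same route as the paper's: both decompose the law of $Z$ as $c\lambda + (1-c)\nu$ with $\nu$ a probability measure, use the mean-zero hypothesis to kill the Haar part, and apply Jensen's inequality (with translation invariance and Tonelli) to show convolution with $\nu$ contracts $L_p$. The only cosmetic difference is at $p=\infty$, where you argue directly via the triangle inequality while the paper passes to the limit from finite $p$; both are fine.
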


\begin{proof}
Fix $1 \leq p < \infty$. Let $\mu$ be the law of $Z$. Define the measure $\nu (A) = (\mu(A) - c|A|)/(1-c)$ for measurable $A \subset \T$. Since $\mu$ is $c$-good, $\nu$ is a Borel probability measure on $\T$. Take $f \in L_p(\mb{T})$ with mean zero. Then by Jensen's inequality we have
\begin{align*}
\norma{B_Z f}{}^p & = \call{0}{1}{ \left| \call{0}{1}{f(x \oplus s)}{\mu(s)}  \right|^p  }{x} \\
&= (1-c)^p\call{0}{1}{ \left| \call{0}{1}{f(x \oplus s)}{\nu(s)}  \right|^p  }{x} \\
& \leq (1-c)^p\call{0}{1}{  \call{0}{1}{|f(x \oplus s)|^p}{\nu(s)}    }{x} \\ 
& = (1-c)^p\norma{f}{}^p \call{0}{1}{}{\nu(s)}  =(1-c)^p \norma{f}{}^p.
\end{align*}

Since $c$ does not depend on $p$ we get the same inequality for $p = \infty$ by passing to the limit.
\end{proof}

\noindent Now we are ready to give the proof of Theorem \ref{thm2}. 

\begin{proof}[Proof of Theorem \ref{thm2}]
Fix $1 \leq p \leq \infty$. Let $Y_1,Y_2, \ldots$ be independent copies of $Y$. Observe that
\begin{align*}
	(A_t^n f)(x) & = \mb{E}f\left( x \oplus tY_1 \oplus \ldots \oplus tY_n)   \right) \\ & = \mb{E}f\left( x \oplus \left( t\sqrt{n}\left(\frac{Y_1 + \ldots + Y_n}{\sqrt{n}} \right) \textrm{mod}  \ 1  \right)  \right).
\end{align*}
Take $n(t)= C_0^2\sufit{1/t^2}N$, where $C_0$ and $N$ are the numbers given by Lemma \ref{lem1}. Therefore, with $X_{n(t)}^{(C)}$ defined by \eqref{eq:defXC}, we can write
\[
(A_t^{n(t)} f)(x)  = \mb{E}f\left( x \oplus X_{n(t)}^{(C)}  \right),
\]
where $C = t \sqrt{n(t)} = t C_0\sqrt{\sufit{1/t^2}N} \geq C_0\sqrt{ N} \geq C_0$. Thus $X_{n(t)}^{(C)}$ is $c(Y)$-good with some constant $c(Y) \in (0,1)$. From Lemma \ref{lem2} we have 
\[
	\norma{A_t^{n(t)} f}{} \leq \left(1-c(Y) \right) \norma{f}{}
\]
for all $f$ satisfying $\int_\mb{T} f=0$. 

The operator $A_f$ is a contraction, namely $\norma{A_t f}{} \leq \norma{f}{}$ for all $f \in L_1(\mb{T})$. Using this observation and the triangle inequality we obtain
\begin{align*}
	\norma{f-A_t f}{} & \geq \frac{1}{n} \left( \norma{f-A_t f}{} + \norma{A_t f-A_t^2 f}{} + \ldots + \norma{A_t^{n-1}f - A_t^{n}f }{}  \right) \\
	& \geq \frac1n \norma{f-A_t^n f}{}.
\end{align*}
Taking $n=n(t)$ we arrive at
\[
	\frac{1}{n(t)} \norma{f-A_t^{n(t)} f}{} \geq \frac{1}{t^{-2}+1} \cdot \frac{1}{C_0^2 \cdot N}\left( \norma{f}{} - \norma{A_t^{n(t)} f}{} \right) \geq \frac{c(Y)}{2C_0^2 \cdot N} t^2 \norma{f}{}.
\]  
It suffices to take $c=c(Y)/(2C_0^2 \cdot N)$.
\end{proof}

\begin{rem*}
Consider an $\ell$-decent random variable $Y$. As it was noticed in the proof of Lemma \ref{lem1} (Step III), the law $Y_1 + \ldots + Y_\ell$ has a bit whose characteristic function is in $L_2$. Conversely, if the law of $S_m = Y_1 + \ldots + Y_m$ has the form $q\mu + (1-q)\nu$ with $q \in (0,1]$ and the characteristic function of $\mu$  belonging to $L_p$ for some $p \geq 1$, then the characteristic function of the bit $\mu^{\star \sufit{p/2}}$ of the sum of $\sufit{p/2}$ i.i.d. copies of $S_m$ is in $L_2$. In particular, that bit has a density function in $L_1 \cap L_2$. Thus $Y$ is $(m\sufit{p/2})$-decent. 
\end{rem*}

\begin{rem*}
The idea to study the operators $A_t$ (see \eqref{eq:defA}) stemmed from the following question posed by Gideon Schechtman (personal communication): \emph{given $\e > 0$, is it true that there exists a natural number $k = k(\e)$ such that for any bounded linear operator $T:L_1[0,1] \to L_1[0,1]$ with $\norma{T}{L_1 \to L_1} \leq 1$ which has the property
\[
\forall f \in L_1[0,1] \ \left( \ |\mathrm{supp} f| \leq 1/2 \implies \norma{Tf}{1} \geq \e \norma{f}{1}
\right) \ \]
there exist $\delta > 0$ and functions $g_1, \ldots, g_k \in L_\infty[0,1]$ such that
\[ \norma{Tf}{1} \geq \delta \norma{f}{1} \quad \textrm{for any $f \in L_1[0,1]$ satisfying $\int_0^1 fg_j = 0$, $j = 1, \ldots, k$}?\]}
This, in an equivalent form, was asked by Bill Johnson in relation with a question on Mathoverflow \cite{MO}. Our hope was that an operator $T = I - A_t$, for some $Y$, would provide a negative answer to Schechtman's question. However, Theorem 2 says that if $Y$ is an $\ell$-decent random variable, then $T$ is nicely invertible on the subspace of functions $f\in L_1$ such that $\int f\cdot 1 = 0$.  
\end{rem*}

\section*{Acknowledgements}

This work was initiated while the authors were visiting the Weizmann Institute of Science in Rehovot, Israel. We thank Prof. Gideon Schechtman for supervision and making our stay possible.

We are grateful to Prof. Krzysztof Oleszkiewicz for his many remarks which led to the present general statement of Theorem \ref{thm2}. We thank Prof. Keith Ball for helping us to simplify the proof of Lemma \ref{lem2}. We also appreciate all the valuable comments Prof. Stanis\l aw Kwapie\' n gave us.

\noindent Piotr Nayar$^\star$, \texttt{nayar@mimuw.edu.pl}

\vspace{1em}

\noindent Tomasz Tkocz$^{\star\dagger}$, \texttt{tkocz@mimuw.edu.pl}

\vspace{1em}

\noindent $^\star$Institute of Mathematics, University of Warsaw, \\
\noindent Banacha 2, \\
\noindent 02-097 Warszawa, \\
Poland. \\

\noindent $^\dagger$Mathematics Institute, University of Warwick, \\
\noindent Coventry CV4 7AL, \\
\noindent UK.

\end{document}